
\documentclass[12pt,a4paper]{amsart}
\usepackage{amssymb,latexsym,amsmath,amscd,graphicx,url,color,comment}
\numberwithin{equation}{section}
\setlength{\parindent}{.4 in}
\setlength{\textwidth}{6.3 in}
\setlength{\topmargin} {-.3 in}
\setlength{\evensidemargin}{0 in}
\setlength{\oddsidemargin}{0 in}
\setlength{\footskip}{.3 in}
\setlength{\headheight}{.3 in}
\setlength{\textheight}{8.8 in}
\setlength{\parskip}{.1 in}
 
\let\oldlabel=\label
\def\prellabel{\marginparsep=1em\marginparwidth=44pt
    \def\label##1{\oldlabel{##1}\ifmmode\else\ifinner\else
         \marginpar{{\footnotesize\ \\ \tt
                    ##1}}\fi\fi}}


\theoremstyle{plain}
\newtheorem{theorem}{\bf Theorem}[section]

\newtheorem{Lemma}[theorem]{Lemma}
\newtheorem{Proposition}[theorem]{Proposition}

\newtheorem{Remark}[theorem]{Remark}

\newtheorem{Definition}[theorem]{Definition}
\newtheorem{Theorem}[theorem]{Theorem}

 \newcommand{\A}{{\mathcal A}}

\newcommand{\K}{{\mathcal K}}

\newcommand{\M}{{\mathcal M}}

\newcommand{\NN}{{\mathbb N}}

\newcommand{\ZZ}{{\mathbb Z}}

\newcommand{\KK}{{\mathbb K}}

\DeclareMathOperator{\Brad}{B-rad}

\DeclareMathOperator{\CS}{CS}

\DeclareMathOperator{\HS}{HS}

\DeclareMathOperator{\GL}{GL}

\DeclareMathOperator{\gin}{gin}
\DeclareMathOperator{\inid}{in}

\DeclareMathOperator{\pol}{pol}

\hyphenation{Sturm-fels}

\title{Radical support for multigraded ideals}

\author{A. Conca}
\address{Dipartimento di Matematica, 
Universit\`a di Genova, Via Dodecaneso 35, 
I-16146 Genova, Italy}
\email{conca@dima.unige.it}

\author{E. De Negri}
\address{Dipartimento di Matematica, 
Universit\`a di Genova, Via Dodecaneso 35, 
I-16146 Genova, Italy}
\email{denegri@dima.unige.it}

\author{E. Gorla}
\address{Institut de Math\'ematiques, Universit\'e de Neuch\^atel, Rue Emile-Argand 11, 
CH-2000 Neuch\^atel, Switzerland}  
\email{elisa.gorla@unine.ch}

\thanks{The first and the second authors were partially supported by  PRIN 2020355B8Y  ``Squarefree Gr\"obner degenerations, special varieties and related topics"  and by GNSAGA-INdAM}

\subjclass[2010]{Primary 13C13,  13C70. Secondary 13P10.}

\begin{document}

\begin{abstract}
Can one tell if an ideal is radical just by looking at the degrees of the generators? In general, this is hopeless. However, there are special collections of degrees in multigraded polynomial rings, with the property that any multigraded ideal generated by elements of those degrees is radical. We call such a collection of degrees a radical support. In this paper, we give a combinatorial characterization of radical supports. Our characterization is in terms of properties of cycles in an associated labelled graph. We also show that the notion of radical support is closely related to that of Cartwright-Sturmfels ideals. In fact, any ideal generated by multigraded generators whose multidegrees form a radical support is a Cartwright-Sturmfels ideal. Conversely, a collection of degrees such that any multigraded ideal generated by elements of those degrees is Cartwright-Sturmfels is a radical support.
\end{abstract}

\maketitle

\section{Radical supports}
\label{RadSup}
Let  $n$ be a positive integer    and $m=(m_1,\dots, m_n)\in  \NN_{>0}^n$.  
Let 
$$S(m)=K[x_{ij}\mid 1\leq j\leq n,\ 1\leq i\leq m_j]$$
 be a polynomial ring over a field $K$ endowed with the standard $\ZZ^n$-grading induced by setting $\deg(x_{ij})=e_j$, where $e_j\in\ZZ^n$ is the $j$-th standard basis vector. We will denote $S(m)$ only by $S$ if there is no danger of confusion.

Given $A\subseteq [n]=\{1,2,\dots, n\}$ we will denote by $S_A$ the $\ZZ^n$-graded homogeneous component of degree $\sum_{i\in A} e_i$ of $S$. An element $f\in S_A$ will be said to have  multidegree $A$. 

\begin{Remark} 
\label{rema1} 
Let $\emptyset \neq A\subseteq [n]$ and let $f\in S_A$.    If $f\neq 0$  then it  cannot have multiple factors in its factorization into irreducible factors.  Hence  $(f)$ is radical and the same obviously holds if $f=0$.  
\end{Remark} 

More generally we have: 

\begin{Lemma} 
\label{facile}
Consider a collection $\A=\{ A_1,\dots, A_s\}$ of non-empty subsets of $[n]$ such that $A_i\cap A_j=\emptyset$ for all $i\neq j$. Then for every $m\in \NN_{>0}^n$, for every field $K$  and for every choice of $f_1\in S_{A_1}, \dots, f_s\in S_{A_s}$ the ideal $I=(f_1,\dots, f_s)$ of $S$ is radical. 
\end{Lemma}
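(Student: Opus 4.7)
The plan is to show that $S/I$ is reduced. Since $A_1,\dots, A_s$ are pairwise disjoint, the variable sets $Y_i=\{x_{kj}\mid j\in A_i,\ 1\leq k\leq m_j\}$ are pairwise disjoint and each $f_i$ lies in $K[Y_i]$. Writing $Z$ for the remaining variables of $S$, the fact that the generators of $I$ lie in disjoint polynomial subrings yields a canonical isomorphism
\[
S/I \;\cong\; K[Y_1]/(f_1)\otimes_K\cdots \otimes_K K[Y_s]/(f_s)\otimes_K K[Z].
\]

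Next, I would base-change to the algebraic closure $\bar K$. Because $\bar K$ is flat over $K$, the natural map $S/I \hookrightarrow (S/I)\otimes_K \bar K$ is injective, so it is enough to check that $(S/I)\otimes_K \bar K$ is reduced. Tensoring the decomposition above with $\bar K$ produces the analogous decomposition over $\bar K$, so we may assume that $K$ is algebraically closed and in particular perfect.

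With $K$ perfect, the proof concludes by combining Remark~\ref{rema1} with a standard commutative algebra fact. Applied over $K$, Remark~\ref{rema1} says that each $f_i$ is squarefree, so every tensor factor $K[Y_i]/(f_i)$ is reduced, and of course $K[Z]$ is reduced as well. Over a perfect field, any reduced algebra is geometrically reduced and a tensor product of geometrically reduced algebras is reduced. Therefore $S/I$ is reduced, i.e.\ $I$ is radical.

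The main obstacle is the well-known subtlety that motivates the detour through $\bar K$: over a non-perfect field, a tensor product of two reduced $K$-algebras may fail to be reduced, so the reducedness of the individual factors $K[Y_i]/(f_i)$ does not automatically assemble into reducedness of $S/I$. Passing to $\bar K$ turns all the tensor products into tensor products over a perfect field, removing this obstruction and reducing the lemma to Remark~\ref{rema1} together with an elementary flatness argument.
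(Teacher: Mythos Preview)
Your argument is correct. The tensor decomposition is valid because the variable sets $Y_i$ are disjoint, and the passage to $\bar K$ followed by the standard fact that tensor products of reduced algebras over a perfect field are reduced settles the lemma. One small simplification you could make: the multidegree argument behind Remark~\ref{rema1} is field-independent (a square factor would force a non-squarefree multidegree), so each $K[Y_i]/(f_i)$ is already \emph{geometrically} reduced over the original $K$. Thus the detour through $\bar K$ is not really needed; you can directly take the tensor product and conclude.

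The paper, however, takes a quite different and shorter route. It fixes any term order on $S$ and observes that, because the sets $A_i$ are pairwise disjoint, the nonzero $f_i$ involve pairwise disjoint sets of variables, so their leading terms are pairwise coprime squarefree monomials. Hence $f_1,\dots,f_s$ form a Gr\"obner basis, the initial ideal $\inid(I)$ is a squarefree monomial ideal and therefore radical, and radicality of $I$ follows by Gr\"obner deformation. This avoids any field-theoretic subtlety (no perfectness, no geometric reducedness) and fits the computational spirit of the paper, which repeatedly appeals to Gr\"obner arguments. Your approach is more structural and perhaps conceptually cleaner from a geometric viewpoint, but it trades a one-line Gr\"obner observation for heavier commutative-algebra machinery.
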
 
\begin{proof} 
We introduce a term order $\sigma$ in $S$. Then the leading term of the nonzero $f_i$'s are pairwise coprime and squarefree.  Hence $f_1,\dots, f_s$ form a Gr\"obner basis with $\inid(I)$ radical and  $I$ is radical as well.  \end{proof} 

For generalities on  Gr\"obner bases and the transfer of properties under Gr\"obner deformation we refer the reader to \cite{BC3} or, for a complete picture, to  the forthcoming \cite{BCRV}. 

The simple assertion of Lemma \ref{facile}  suggests the following definition. 

\begin{Definition}
\label{def1} 
 A collection (repetitions are allowed) $\A=\{ A_1,\dots, A_s\}$  of non-empty subsets of $[n]$ is a radical support  with respect to a field $K$ if for  every $m=(m_1,\dots, m_n)\in  \NN_{>0}^n$ and for every choice of $f_1\in S_{A_1}, \dots, f_s\in S_{A_s}$ the ideal $I=(f_1,\dots, f_s)$ of $S$ is radical. Furthermore we say that 
 $\A$ is a radical support if it is a radical support for every field $K$.  
\end{Definition}

Our goal is to provide a combinatorial characterization of radical supports. We stress that in Definition \ref{def1} the required property should hold for every $m\in \NN_{>0}^n$. For example if $m=(1,1,\dots,1)$  then  $\ZZ^n$-graded ideals of $S$ are indeed monomial ideals and hence $I$ is radical if - and only if - its minimal generators have squarefree degrees (independently of the field).  

As we will see, the characterization of radical supports is related to the notion of Cartwright-Sturmfels ideals. This notion has its roots in the work of Bernstein, Boocher, Brion, Cartwright, Conca, Sturmfels, Villarreal and Zelevisky \cite{BZ, Bo, Br,C,CS, SZ,V} among others. The theory of Cartwright-Sturmfels ideals has been developed in a series of papers by the authors of the present manuscript \cite{CDG1,  CDG2, CDG3, CDG4,  CDG5} and studied further by Conca and Welker in \cite{CW}. Here we quickly revise the definition and the main features of  Cartwright-Sturmfels ideals referring the readers to \cite{CDG1,  CDG2, CDG3, CDG4,  CDG5} for more general statements and for  the proofs.

\section{Cartwright-Sturmfels ideals} 

We follow the notation of the previous section.     
The group $G=\GL_{m_1}(K)\times\cdots\times\GL_{m_n}(K)$ acts  on $S$ as the group of multigraded $K$-algebra automorphisms.  Let $B=B_{m_1}(K)\times \cdots  \times B_{m_n}(K)$ be the Borel   subgroup of $G$, consisting of the upper triangular  matrices in $G$. 
Let $I$ be   a multigraded ideal of $S$ and let $\sigma$ be a term order  on $S$ with $x_{ij}>x_{kj}$  for all $j$ and all $i<k$.   

If  $K$ is infinite we may define,  as in  the standard $\ZZ$-graded situation, the (multigraded) generic initial ideal $\gin(I)$ of $I$ with respect to $\sigma$ as  $\inid_\sigma(g(I))$ with $g$ a ``generic" element in $G$. It turns out that $\gin(I)$ is  Borel fixed, i.e., fixed by the action of $B$. 

Radical Borel-fixed ideals play an important role in the definition of Cartwright-Sturmfels ideals.

\begin{Definition}  
We let $\Brad(S)$  be the set of radical monomial ideals $J$ of $S$ such that for every monomial $f\in S$ and $i,j$ such that  $x_{ij}f$ is a generator of $J$ one has $x_{kj}f\in J$ for all $k<i$. 
\end{Definition} 

The ideals in  $\Brad(S)$ are Borel-fixed. Indeed, at least if $K$ is infinite, $\Brad(S)$ is the set of Borel-fixed radical ideals, hence the name. 
 
 Let $M$ be a finitely generated $\ZZ^n$-graded $S$-module and assume for simplicity that $M_a=0$ if $a\not\in \NN^n$.  
 
The multigraded Hilbert series $H_M(z_1,\dots,z_n)$ of $M$  has a rational expression 
 $$\HS_M(z_1,\dots,z_n)=\frac{\KK_M(z_1,\dots,z_n)}{\prod_{i=1}^n(1-z_i)^{m_i}}.$$
Here  $\KK_M(z_1,\dots,z_n)$ is a polynomial with  coefficients in $\ZZ$ known as the $K$-polynomial of $M$. The dual $K$-polynomial $\KK^*_M(z_1,\dots,z_n)$ of $M$ is defined as
 $$\KK^*_M(z_1,\dots,z_n)=\KK_M(1-z_1,\dots,1-z_n).$$
If $n$ is clear from the context we will use  $\HS_M(z)$ for $\HS_M(z_1,\dots,z_n)$ and similarly will use $\KK_M(z)$ and  $\KK^*_M(z)$.  
\begin{Definition}\label{sameHS}
Let $I$ be a multigraded ideal of $S$. Then $I$ is Cartwright-Sturmfels if there exists $J\in \Brad(S)$ such that $\HS_{S/I}(z)=\HS_{S/J}(z)$. 
\end{Definition}  
We denote by $\CS(S)$ the family of Cartwright-Sturmfels ideals of $S$.
It turns out that the ideal $J$ that appears in Definition \ref{sameHS} is uniquely determined by (the multigraded Hilbert series of) $I$, see \cite[Theorem 3.5]{CDG1}. This leads to the following characterization. 
\begin{Proposition}\label{ginRad}
Assume that $K$ is infinite. Then $I$ is Cartwright-Sturmfels if and only if $\gin(I) \in \Brad(S)$.
\end{Proposition}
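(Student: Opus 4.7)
The plan is to prove the two implications separately: the ``if'' direction is essentially Hilbert series bookkeeping, while the ``only if'' direction is the substantive one and relies on the uniqueness assertion of \cite[Theorem 3.5]{CDG1}.

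First, assume $\gin(I)\in\Brad(S)$. The idea is to take $J:=\gin(I)$ as the witness required by Definition \ref{sameHS}. Since $\gin(I)=\inid_\sigma(g(I))$ for a generic $g\in G$, and since both the action of $g\in G$ (a $\ZZ^n$-graded $K$-algebra automorphism) and the passage to the $\sigma$-initial ideal (a flat $\ZZ^n$-graded degeneration) preserve the multigraded Hilbert series, we obtain $\HS_{S/I}(z)=\HS_{S/g(I)}(z)=\HS_{S/\gin(I)}(z)$, which verifies $I\in\CS(S)$.

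Conversely, assume $I\in\CS(S)$ and let $J\in\Brad(S)$ realize $\HS_{S/I}(z)=\HS_{S/J}(z)$. By the same Hilbert series invariance, $\gin(I)$ is Borel fixed and satisfies $\HS_{S/\gin(I)}(z)=\HS_{S/J}(z)$. To conclude $\gin(I)\in\Brad(S)$ it suffices to show that $\gin(I)$ is radical, since $\Brad(S)$ is precisely the class of radical Borel fixed monomial ideals.

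This radicality is the main obstacle. I plan to deduce it from \cite[Theorem 3.5]{CDG1}: $J$ is the unique element of $\Brad(S)$ with the given multigraded Hilbert series, and $\gin(I)$ is a Borel fixed multigraded monomial ideal sharing this Hilbert series with $J$. A multidegree-by-multidegree comparison — in each $a\in\NN^n$, Borel fixedness constrains the monomials of $\gin(I)_a$ and $J_a$ to be an initial segment under the column-wise order induced by $B$, and equality of the dimensions $\dim_K (S/\gin(I))_a=\dim_K (S/J)_a$ forces these initial segments to coincide — should identify $\gin(I)=J$, placing $\gin(I)$ in $\Brad(S)$. Equivalently, one can invoke closure of $\CS(S)$ under Gröbner degenerations (so that $\gin(I)\in\CS(S)$ with the same witness $J$) and then appeal to the uniqueness in \cite[Theorem 3.5]{CDG1} to identify the Borel fixed ideal $\gin(I)$ with its $\Brad(S)$-witness.
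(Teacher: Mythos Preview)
The paper does not supply its own proof of this proposition: it is one of the ``main features'' of Cartwright--Sturmfels ideals recalled from \cite{CDG1,CDG2,CDG3,CDG4,CDG5}, so there is no in-paper argument to compare against. I will therefore assess your proposal on its own terms.

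Your ``if'' direction is fine: $\gin(I)$ has the same multigraded Hilbert series as $I$, so if $\gin(I)\in\Brad(S)$ it serves as the witness $J$ in Definition~\ref{sameHS}.

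The ``only if'' direction, however, has a genuine gap. Your multidegree-by-multidegree claim---that Borel fixedness forces the monomials of $\gin(I)_a$ to be an \emph{initial segment}, hence determined by $\dim_K\gin(I)_a$---is not correct. Borel-fixed sets of monomials in a fixed multidegree are down-closed under the Borel partial order, not initial segments of a total order; already for a squarefree multidegree $a=e_{j_1}+e_{j_2}$ with $m_{j_1},m_{j_2}\geq 2$ there are several down-closed sets of the same cardinality in $[m_{j_1}]\times[m_{j_2}]$. So equality of Hilbert functions does not, degree by degree, force $\gin(I)_a=J_a$. Your alternative route is also circular as stated: the uniqueness in \cite[Theorem~3.5]{CDG1}, as you have phrased it, singles out $J$ \emph{within} $\Brad(S)$, but you do not yet know that $\gin(I)\in\Brad(S)$---that is exactly the conclusion you are after---so you cannot identify $\gin(I)$ with its $\Brad(S)$-witness without an extra argument.

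What is actually needed (and what the cited references supply) is the stronger statement that a radical Borel-fixed ideal is extremal among \emph{all} Borel-fixed ideals with the same multigraded Hilbert series, so that any Borel-fixed ideal sharing the Hilbert series of some $J\in\Brad(S)$ must equal $J$. You should either locate and invoke that statement explicitly from \cite{CDG1,CDG2}, or supply the extremality argument yourself; the ``initial segment'' heuristic does not do the job.
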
  
For every $j\in [n]$ we rename $y_j$ the variable $x_{1j}$ and let $T=K[y_1,\dots, y_n] \subseteq S$ with induced (fine) $\ZZ^n$-graded structure, i.e. $\deg y_{j}=e_j\in \ZZ^n$. Let $\M(T,m)$ be the set of the monomial ideals of $T$ generated by monomials in the $y$'s, whose exponent vector is bounded from above by $m=(m_1,\dots,m_n)$. 

The two sets  $\Brad(S)$ and $\M(T,m)$  of monomial ideals are in one-to-one correspondence via the map

\begin{itemize}
 \item [(1)] $\psi: \M(T,m) \to  \Brad(S)$ that sends $E \in \M(T,m)$ to $\psi(E)=J=\pol(E)^*$. Here $\pol(E)$ is the polarization of $E$  and the star $^*$  denotes the Alexander dual. 
\end{itemize} 

When $K$ is infinite, the inverse of $\psi$ can be defined as  follows.

\begin{itemize}
\item [(2)] $\phi:\Brad(S) \to \M(T,m)$ sends $J\in \Brad(S)$ to $\phi(J)=E$, where $E \in \M(T,m)$ is uniquely determined by the property $\gin(J^*)=ES$.  Here $J^*$ is the Alexander dual of $J$ and  $ES$ is the extension of $E$ to $S$.  
\end{itemize} 
   
When $K$ is finite, one can take an infinite extension $F$ of $K$. The map $\phi$ is defined on $F$ and $\phi(J)=E$ is an ideal generated by monic monomials. The same monomials generate the ideal $\psi^{-1}(J)\in \M(T,m)$. 

\begin{Proposition}\label{JandE} 
Let $K$ be an arbitrary field. Given $J\in \Brad(S)$ and $E\in \M(T,m)$ one has that   $J$ corresponds to $E$ in the bijective correspondence  above  if and only if $\KK^*_{S/J}(z)=\KK_E(z)$.  
\end{Proposition}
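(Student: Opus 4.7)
The plan is to prove the forward implication by separately controlling the effect of polarization and of Alexander duality on the $K$-polynomial, and then to deduce the converse from the bijectivity of $\psi$. The key observation is that the composition $E\mapsto\pol(E)\mapsto\pol(E)^{*}=J$ decomposes into two operations, each of which induces a simple transformation on the multigraded $K$-polynomial.

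First, I would show that polarization preserves the $\ZZ^n$-graded $K$-polynomial, i.e.\ $\KK_{E}(z)=\KK_{\pol(E)}(z)$. This is because $T/E$ is obtained from $S/\pol(E)$ by killing the regular sequence $\{x_{kj}-x_{1j}\colon j\in[n],\ 2\leq k\leq m_j\}$, which consists of $m_j-1$ elements of degree $e_j$ for each $j$. Hence $\HS_{T/E}(z)=\HS_{S/\pol(E)}(z)\cdot\prod_{j=1}^n(1-z_j)^{m_j-1}$, and multiplying by the appropriate denominators gives $\KK_{T/E}(z)=\KK_{S/\pol(E)}(z)$, so that $\KK_E(z)=\KK_{\pol(E)}(z)$. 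Next, I would prove the Alexander duality identity $\KK^{*}_{S/L^{*}}(z)=\KK_{L}(z)$ for any squarefree monomial ideal $L=I_\Gamma\subseteq S$ with $L^{*}=I_{\Gamma^{\vee}}$. It is cleanest to pass first to the fine $\ZZ^{|V|}$-grading with $V=\{(i,j)\}$, in which the Stanley-Reisner formula gives
$$\KK_{S/I_\Gamma}(z)=\sum_{F\in\Gamma}\prod_{v\in F}z_v\prod_{v\notin F}(1-z_v).$$
Substituting $z_v\mapsto 1-z_v$, using the characterization $F\in\Gamma^{\vee}\iff V\setminus F\notin\Gamma$, and applying the binomial identity $\sum_{G\subseteq V}\prod_{v\in G}z_v\prod_{v\notin G}(1-z_v)=1$, one obtains
$$\KK^{*}_{S/I_{\Gamma^{\vee}}}(z)=\sum_{G\notin\Gamma}\prod_{v\in G}z_v\prod_{v\notin G}(1-z_v)=\KK_{I_\Gamma}(z)$$
in the fine grading. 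Coarsening via $z_{ij}\mapsto z_j$ commutes with the variable-wise substitution $z\mapsto 1-z$ that defines the dual $K$-polynomial, so the identity descends to the $\ZZ^n$-grading. Combining the two steps with $L=\pol(E)$ and $J=\psi(E)=\pol(E)^{*}$ yields $\KK^{*}_{S/J}(z)=\KK_L(z)=\KK_E(z)$, which is the forward implication.

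For the converse, suppose $\KK^{*}_{S/J}(z)=\KK_E(z)$. Since $\psi\colon\M(T,m)\to\Brad(S)$ is a bijection, there exists a unique $E'\in\M(T,m)$ with $J=\psi(E')$, and the forward direction applied to $(J,E')$ gives $\KK^{*}_{S/J}(z)=\KK_{E'}(z)$, whence $\KK_E(z)=\KK_{E'}(z)$. Because $T$ is fine-graded by $\ZZ^n$, two monomial ideals of $T$ with equal $K$-polynomials have identical exponent sets and therefore coincide; so $E=E'$ and $J=\psi(E)$. The main technical obstacle is the bookkeeping in the Alexander duality step: one must verify that the fine-graded identity descends correctly to the $\ZZ^n$-grading after coarsening. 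All three ingredients used (polarization, the Stanley-Reisner formula, and the bijection $\psi$) are available over an arbitrary base field, so no passage to an infinite extension is required.
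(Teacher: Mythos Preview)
Your argument is correct. The forward direction is handled cleanly: polarization preserves the $\ZZ^n$-graded $K$-polynomial because $T/E$ is obtained from $S/\pol(E)$ by factoring out a regular sequence of $m_j-1$ linear forms of degree $e_j$ for each $j$; and the Alexander-duality identity $\KK^{*}_{S/L^{*}}(z)=\KK_{L}(z)$ follows from the Stanley--Reisner expansion together with the complement bijection $F\mapsto V\setminus F$ and the partition-of-unity identity $\sum_{G\subseteq V}\prod_{v\in G}z_v\prod_{v\notin G}(1-z_v)=1$. Your observation that coarsening $z_{ij}\mapsto z_j$ commutes with the substitution $z\mapsto 1-z$ is exactly what is needed to pass from the fine grading to the $\ZZ^n$-grading. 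The converse is immediate from the bijectivity of $\psi$ and the fact that in the finely graded ring $T$ a monomial ideal is determined by its Hilbert function.

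As for comparison with the paper: the paper does \emph{not} supply a proof of this proposition. It is presented there as part of a review of background material on Cartwright--Sturmfels ideals, with the reader referred to the authors' earlier work for proofs. Your write-up therefore fills in a genuine gap relative to the present manuscript. The approach you take---decomposing $\psi$ into polarization and Alexander duality and tracking the $K$-polynomial through each step---is the natural and standard one, and is in the same spirit as the treatment in the cited references.
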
 
 
Furthermore we have

\begin{Lemma} 
\label{CS-K}
Let $m=(m_1,\dots,m_n), q=(q_1,\dots,q_n)\in \NN_{>0}^n$. Let $K$ and $L$ be fields, let
$I\subseteq S=K[x_{ij} : j\in [n] \mbox{ and } i\leq m_j]$ and $J\subseteq R=L[x_{ij} : j\in [n] \mbox{ and } i\leq q_j]$ be  multigraded ideals  such that $\KK_{S/I}(z)=\KK_{R/J}(z)$. Then $I\in \CS(S)$ if and only if $J\in \CS(R)$. 
\end{Lemma}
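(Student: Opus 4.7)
The approach is to use Proposition \ref{JandE} to transform both CS conditions into the existence of a monomial ideal with a prescribed $K$-polynomial, and then exploit the field-independence of monomial ideals to see that both reduce to the same combinatorial condition. Combining Definition \ref{sameHS} with the bijection $\psi\colon\M(T,m)\to\Brad(S)$ and Proposition \ref{JandE}, I would first show that $I\in\CS(S)$ if and only if there is $E\in\M(K[y_1,\ldots,y_n],m)$ with $\KK_E(z)=\KK^*_{S/I}(z)$, and analogously $J\in\CS(R)$ if and only if there is $E'\in\M(L[y_1,\ldots,y_n],q)$ with $\KK_{E'}(z)=\KK^*_{R/J}(z)$. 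From $\KK_{S/I}=\KK_{R/J}$ the substitution $z_j\mapsto 1-z_j$ yields $\KK^*_{S/I}=\KK^*_{R/J}$, and I call this common polynomial $Q$.

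Next, a monomial ideal is a purely combinatorial object independent of the field, and the identity $\KK_E=1-\HS_{T/E}(z)\prod_j(1-z_j)$ shows that $\KK_E$ determines $\HS_{T/E}$, hence the staircase, hence $E$. So at most one monomial ideal $E_0$ satisfies $\KK_{E_0}=Q$. Using the Taylor resolution one checks that $\deg_{z_j}(\KK_{E_0})$ equals the maximal $y_j$-exponent appearing in a minimal generator of $E_0$; denote this number $d_j$. Consequently $E_0$ belongs to $\M(K[y_1,\ldots,y_n],m)$ if and only if $d_j\le m_j$ for every $j$, with the analogous statement for $q$. Each CS condition therefore becomes the conjunction of a combinatorial part (existence of $E_0$) and a numerical part ($d_j\le m_j$, resp.\ $d_j\le q_j$).

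Finally, I would argue that the hypotheses force both numerical parts to hold simultaneously whenever $E_0$ exists. Letting $B$ denote the $\Brad$-ideal in $K[x_{ij}:i\le d_j]$ corresponding to $E_0$ under the minimal instance of $\psi$, one has $\KK_{K[x_{ij}:i\le d_j]/B}(z)=Q(1-z)=\KK_{S/I}(z)$, so $\HS_{S/I}(z)=\HS_{K[x_{ij}:i\le d_j]/B}(z)\cdot\prod_j(1-z_j)^{d_j-m_j}$. Since $I$ is multigraded in $S$, this power series has non-negative coefficients. The main obstacle is then the following non-negativity step: one must show that if $d_j>m_j$ for some $j$, the polynomial factor $(1-z_j)^{d_j-m_j}$ with positive exponent necessarily introduces a negative coefficient into $\HS_{S/I}$ at some multidegree, contradicting validity. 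This reduces to exhibiting a multidegree at which the Hilbert function of $K[x_{ij}:i\le d_j]/B$ strictly decreases along the $j$-th direction, which should follow from a structural analysis of the Borel-fixed radical monomial ideal $B$ at its top variable $x_{d_jj}$. Granting this, $d_j\le m_j$, and the symmetric argument applied in $R$ gives $d_j\le q_j$; hence $E_0$ lies in both monomial-ideal sets, and the equivalence of CS conditions follows by a further application of Proposition \ref{JandE}.
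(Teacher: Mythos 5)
Your approach differs substantially from the paper's. The paper extends both $I$ and $J$ to a single common polynomial ring $T=L[x_{ij}:i\le r_j]$ with $r_j=\max\{m_j,q_j\}$, transfers the Borel-fixed radical witness $H\in\Brad(S)$ to $H'\in\Brad(T)$, concludes $JT\in\CS(T)$ by matching $K$-polynomials, and then passes from $JT\in\CS(T)$ down to $J\in\CS(R)$ via the compatibility $\gin(JT)=\gin(J)T$. The bound on the exponents of the variables, which you identify as the crux, is obtained for free in the paper's argument: $\gin(J)$ is automatically an ideal of $R$, hence its monomial generators are supported on $x_{ij}$ with $i\le q_j$, and the Borel and radicality conditions descend.

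In your proposal the corresponding step is not proved. You correctly reduce to the statement that the unique monomial ideal $E_0$ with $\KK_{E_0}=\KK^*_{S/I}$ must satisfy $d_j\le q_j$ for all $j$, and you identify the ``main obstacle'': showing that if $d_j>q_j$ then $\HS_{S/I}(z)\cdot\prod_j(1-z_j)^{q_j-m_j}$ (equivalently, $\HS_{P/B}(z)\cdot\prod_j(1-z_j)^{d_j-q_j}$ with $P=K[x_{ij}:i\le d_j]$ and $B=\psi(E_0)$) must have a negative coefficient. This is the entire content of the lemma and it is left as an assertion; your suggestion that it ``should follow from a structural analysis of the Borel-fixed radical monomial ideal $B$ at its top variable $x_{d_jj}$'' is vague, and a direct Hilbert-series argument is not straightforward (multiplying by $(1-z_j)$ does not obviously produce a negative coefficient, since $P/B$ is not in general free or even Cohen-Macaulay in the relevant direction, and cancellation can occur across multidegrees). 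There is also a smaller unverified claim: that $\deg_{z_j}\KK_{E_0}$ equals the maximal $y_j$-exponent of a minimal generator needs an argument ruling out top-degree cancellation, since the Taylor complex is not minimal. So the proposal contains a genuine gap, concentrated exactly where the paper invokes the gin machinery; filling it would amount to re-proving that machinery by hand.
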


\begin{proof} 
Assume $I\in \CS(S)$. Then there exists $H\in \Brad(S)$ such that $I$ and $H$ have the same Hilbert series, in particular $\KK_{S/I}(z)=\KK_{S/H}(z)$. Then set  $r=(r_1,\dots, r_n)$ with $r_j= \max\{m_j,q_j\}$ for all  $j\in [n] $ and $T=L[x_{ij} : j\in [n] \mbox{ and } i\leq r_j]$. Then we have the inclusion $R\subseteq T$. 
We may consider  the ideal $H'$ of $T$ generated by the monomial generators of $H$. 
Since  $H\in \Brad(S)$ we have $H'\in \Brad(T)$.  We can consider the extensions  $JT$ of $J$. Obviously the $K$-polynomial  does not change under a polynomial extension and under the passage from $H$ to $H'$. Hence 
$$\KK_{T/JT}(z)=\KK_{R/J}(z)=\KK_{S/I}(z)=\KK_{S/H}(z)=\KK_{T/H'}(z).$$
Hence $JT$ has the same Hilbert series of $H'$, that is, $JT \in \CS(T)$. 
It remains to prove that $JT \in \CS(T)$ implies $J\in \CS(R)$. First assume that $L$ is infinite. 
The computation of generic initial ideals commute  with polynomial extensions, i.e. $\gin(JT)=\gin(J)T$. Since  $\gin(JT)\in \Brad(T)$ we have that $\gin(J)\in \Brad(R)$ and hence $J\in \CS(R)$.  If $L$ is finite then we can consider an infinite extension  of $L$, compute the gin of $J$ in with coefficients in the extension and consider the outcome in the original polynomial ring $R$. In this way, repeating the argument above, we obtain an ideal in $\Brad(R)$ with the Hilbert series of $J$. Hence $J\in \CS(R)$ in this case as well. 
 \end{proof} 

Cartwright-Sturmfels ideals remains Cartwright-Sturmfels under  arbitrary $\ZZ^n$-graded linear section. This is proved in \cite{CDG2} under the assumption that the base field is infinite but the result holds in general as the reader can easily check. 
 
\begin{Proposition}\label{section}
 Let $L$ be an ideal of $S$ that is generated by  $\ZZ^n$-graded  linear forms. We identify $R=S/(L)$  with a polynomial ring with the induced $\ZZ^n$-graded structure and $J=I+(L)/(L)$ with an ideal of $R$. If $I\in \CS(S)$, then   $J\in \CS(R)$.
\end{Proposition}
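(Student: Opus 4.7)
The plan is to reduce the statement to the infinite-field version already proved in \cite{CDG2}, via a flat extension of scalars combined with Lemma \ref{CS-K}. This is exactly the extension the authors have in mind when they write that the general case ``the reader can easily check''.

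First I would pass from $K$ to an infinite extension $F$ (for example $F=K(t)$). Setting $S'=S\otimes_K F$, $I'=IS'$, $L'=LS'$, $R'=R\otimes_K F$, and $J'=(I'+L')/L'$, everything remains $\ZZ^n$-graded: the linear forms generating $L'$ are still $\ZZ^n$-graded and the identification of $R'$ with a polynomial ring in the induced grading is inherited from that of $R$. Flatness of $K\subseteq F$ preserves multigraded Hilbert series, and hence $K$-polynomials, so
\[
\KK_{S'/I'}(z)=\KK_{S/I}(z),\qquad \KK_{R'/J'}(z)=\KK_{R/J}(z).
\]

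Next I would transport the Cartwright--Sturmfels property across the field change using Lemma \ref{CS-K}: from $I\in\CS(S)$ we obtain $I'\in\CS(S')$. The result from \cite{CDG2} now applies to the $\ZZ^n$-graded linear section defined by $L'$ over the infinite field $F$, and yields $J'\in\CS(R')$. A second application of Lemma \ref{CS-K} then gives $J\in\CS(R)$, as desired.

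The hard part is really the infinite-field case itself, proved in \cite{CDG2} via the characterization $\gin(I)\in\Brad(S)$ from Proposition \ref{ginRad}: one shows that cutting by a graded linear form keeps the generic initial ideal in $\Brad$. Once that is taken for granted, the step above is routine; the only care needed is to check that base change commutes with forming quotients, with computing multigraded Hilbert series, and with the polynomial-ring identification of $R$, which is standard flat-descent bookkeeping.
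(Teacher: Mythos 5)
Your proof is correct and is essentially the argument the authors have in mind: the paper does not spell out the passage from the infinite-field case of \cite{CDG2} to arbitrary fields, remarking only that ``the reader can easily check'' it, and your field-extension argument via Lemma~\ref{CS-K} is exactly the intended bookkeeping. You correctly use that $-\otimes_K F$ preserves graded dimensions (hence $K$-polynomials), and that Lemma~\ref{CS-K} lets you transfer the Cartwright--Sturmfels property across the two changes of field because the multigraded polynomial rings $S,S'$ (resp.\ $R,R'$) have the same degree data $m$ (resp.\ the induced $q$). The one place you could be slightly more explicit is why $R'=R\otimes_K F$ still carries the structure of a multigraded polynomial ring with the same number of variables of each multidegree as $R$, which is what Lemma~\ref{CS-K} implicitly requires; this is immediate since $L'=LS'$ is still generated by $\ZZ^n$-graded linear forms, but it is worth saying. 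No gaps; the approach coincides with the paper's.
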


We conclude with a definition: 
  \begin{Definition}
\label{def2} 
 A collection (repetitions are allowed) $\A=\{ A_1,\dots, A_s\}$  of non-empty subsets of $[n]$ is a  Cartwright-Sturmfels support   if for  every $m=(m_1,\dots, m_n)\in  \NN_{>0}^n$ and for every choice of $f_1\in S_{A_1}, \dots, f_s\in S_{A_s}$ the ideal $I=(f_1,\dots, f_s)$ of $S$ is Cartwright-Sturmfels. 
\end{Definition} 
  
\section{Support for regular sequences and the graph associated to the support}  

For later applications, we establish some simple facts. 

\begin{Lemma}\label{suppreg} 
Let   $\A=\{ A_1,\dots, A_s\}$ be a collection of non-empty subsets  of $[n]$. 
The following are equivalent:
 \begin{itemize} 
 \item[(1)]  There exist  $f_1\in S_{A_1}, \dots, f_s\in S_{A_s}$ such that  $f_1,\dots, f_s$ is a regular sequence and $(f_1,\dots, f_s)$ is radical. 
 \item[(2)]  There exist  $f_1\in S_{A_1}, \dots, f_s\in S_{A_s}$ such that  $f_1,\dots, f_s$ is a regular sequence.
 \item[(3)] For every $j\in [n]$ we have $|\{ v : j\in A_v\}|\leq m_j$. 
 \end{itemize} 
\end{Lemma}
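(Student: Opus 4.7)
The natural strategy is to prove the cyclic chain $(1)\Rightarrow(2)\Rightarrow(3)\Rightarrow(1)$. The first implication is immediate from the definitions.

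For $(2)\Rightarrow(3)$, I would fix $j\in[n]$ and consider the prime ideal $\mathfrak{p}_j:=(x_{1j},\ldots,x_{m_jj})$ of height $m_j$. Because every monomial of multidegree $A_v$ with $j\in A_v$ must contain exactly one variable from the $j$-th column, each such $f_v$ lies in $\mathfrak{p}_j$. Setting $V_j:=\{v:j\in A_v\}$, the subsequence $(f_v)_{v\in V_j}$ of a regular sequence of positive-degree homogeneous elements in a polynomial ring is itself a regular sequence, because regular sequences of such forms are permutable. Therefore $(f_v:v\in V_j)$ has height $|V_j|$, and being contained in $\mathfrak{p}_j$ forces $|V_j|\leq m_j$.

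For $(3)\Rightarrow(1)$, I would exhibit an explicit witness made of squarefree monomials. Using $|V_j|\leq m_j$, pick, for each $j\in[n]$, an injection $\pi_j:V_j\hookrightarrow[m_j]$, and set
$$f_v:=\prod_{j\in A_v}x_{\pi_j(v),\,j}\qquad\text{for }v\in[s].$$
A direct check shows that $f_v$ and $f_w$ share no variable whenever $v\neq w$: for $j\in A_v\cap A_w$ both indices lie in $V_j$ and the injectivity of $\pi_j$ separates them in the $j$-th column, while for $j$ belonging to only one of $A_v,A_w$ the two monomials live in disjoint sets of columns. Hence $f_1,\ldots,f_s$ are pairwise coprime squarefree monomials, so they form a regular sequence in any order and generate a squarefree monomial ideal, which is automatically radical. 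This delivers (1) (and in particular (2)).

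The proof is largely combinatorial; the only non-combinatorial input is the standard fact that a subsequence of a regular sequence of positive-degree homogeneous forms in a polynomial ring is again a regular sequence. The main point requiring care is the coprimeness verification for the monomials $f_v$ in $(3)\Rightarrow(1)$, after which pairwise coprimeness of squarefree monomials delivers both the regular sequence property and radicality at once.
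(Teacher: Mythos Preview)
Your proof is correct and follows essentially the same route as the paper: the same cyclic chain, the same height/containment argument for $(2)\Rightarrow(3)$ via the prime $(x_{1j},\ldots,x_{m_jj})$, and the same construction of pairwise coprime squarefree monomials for $(3)\Rightarrow(1)$. The only cosmetic difference is that the paper picks the explicit injection $\pi_j(v)=|\{w\leq v:j\in A_w\}|$, whereas you allow any injection $\pi_j:V_j\hookrightarrow[m_j]$; the verification of coprimeness is then identical.
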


\begin{proof} 
That condition (1) implies condition (2) is obvious. For (2) implies (3) suppose there exist  $f_1\in S_{A_1}, \dots, f_s\in S_{A_s}$ such  $f_1,\dots, f_s$ is a regular sequence.  For a given $j\in [n]$ the ideal  $( f_v : j\in A_v)$ is generated by  regular sequence and contained in the ideal generated by $S_{e_j}$.  Hence we have that $|\{ v : j\in A_v\}|\leq m_j$.  Finally for (3) implies (1)  for every $v\in [s]$ and every $j\in A_v$ we set 
$$k_{j,v}=|\{ w: w\leq v \mbox{ and } j\in A_w\}|.$$ 
By assumption $|\{ v : j\in A_v\}|\leq m_j$ and hence $k_{j,v}\leq m_j$.  Therefore  we may set 
$$f_v=\prod_{j \in A_v} x_{k_{j,v},j} \in S_{A_v}$$
for every $v\in [s]$. Notice that $k_{j,v}\geq 1$, since $j\in A_v$. Moreover, $x_{k_{j,v},j}=x_{k_{i,w},i}$ implies $i=j$ and if $v>w$, then $k_{j,v}>k_{j,w}$. Hence  $f_1,\dots, f_s$ is a set of pairwise coprime and squarefree monomials.   
 In other words,  $f_1,\dots, f_s$ is a (monomial) regular sequence that generates a radical ideal. 
\end{proof} 

\begin{Lemma}\label{nonrad1} 
Suppose $I=(f_1,\dots, f_c)$ is a non-radical ideal of a ring $R$. Then for every $1\leq t\leq c$ the ideal 
$J=(f_1,\dots, f_{t-1}, xf_{t}, xf_{t+1}, \dots, xf_c)$ of the polynomial extension $R[x]$ is not radical. 
\end{Lemma}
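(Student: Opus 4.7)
The plan is to produce an explicit witness of non-radicality in $R[x]$ from a witness in $R$. Since $I$ is not radical, fix $g\in R\setminus I$ and an integer $k\geq 2$ with $g^k\in I$. My candidate for showing $J$ is not radical is $h:=xg\in R[x]$.

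First I would verify that $h^k\in J$. Writing $g^k=\sum_{i=1}^{c} a_i f_i$ with $a_i\in R$, we get $h^k=x^k g^k=\sum_{i=1}^{c} x^k a_i f_i$. For $i<t$ the term $x^k a_i f_i$ lies in $J$ because $f_i\in J$; for $i\geq t$, write $x^k a_i f_i=(x^{k-1}a_i)(xf_i)$, and since $xf_i$ is a generator of $J$, this term too lies in $J$. Hence $h^k\in J$.

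Next I would show $h\notin J$, by contradiction. Suppose $xg\in J$. Then there exist $b_1,\dots,b_c\in R[x]$ with
\[
xg=\sum_{i<t} b_i f_i+\sum_{i\geq t} x b_i f_i.
\]
Expanding $b_i=\sum_{j\geq 0} b_{ij}x^j$ with $b_{ij}\in R$ and comparing the coefficient of $x^1$ on both sides (noting that the $f_i$ do not involve $x$), the left-hand side contributes $g$ while the right-hand side contributes
\[
\sum_{i<t} b_{i,1} f_i+\sum_{i\geq t} b_{i,0} f_i\in I.
\]
This yields $g\in I$, contradicting the choice of $g$.

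The two steps together give $h^k\in J$ but $h\notin J$, so $J$ is not radical. I do not foresee a real obstacle: the only subtle point is choosing the right witness, namely $xg$ rather than $g$ itself or a pure power of $x$, so that the coefficient-of-$x^1$ argument cleanly forces membership back into $I$.
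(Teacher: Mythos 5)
Your proof is correct and follows essentially the same route as the paper: both use $xg$ (the paper takes $a$ with $a^2\in I$, $a\notin I$, and the witness $ax$) and show it lies outside $J$ by reading off the degree-$1$ part of $R[x]$ with respect to $\deg x=1$. Your explicit coefficient-of-$x^1$ computation is precisely the paper's observation that the homogeneous component $J_1$ equals $xI$.
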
 

\begin{proof} Consider on $R[x]$ the graded structure associated to the assignment $\deg x=1$ and $\deg a=0$ for all $a\in R$. Then $J$ is homogeneous and its homogeneous components $J_i$ with $i>0$ are all equal to $I$. Let $a\in R$ be such that $a\not\in I$ and $a^2\in I$. Then $ax\not\in J$ and $(ax)^2 \in J$ so that  $J$ is not radical. 
\end{proof} 

\begin{Lemma}\label{subcolle} 
If $\A=\{ A_1,\dots, A_s\}$  is a radical support with respect to a field $K$ then every subcollection of $\A$  is a radical support  with respect to $K$. 
\end{Lemma}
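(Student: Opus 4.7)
The argument is essentially an unpacking of Definition \ref{def1}, done by contradiction. The plan is as follows. Suppose for contradiction that some subcollection $\A'\subseteq \A$ fails to be a radical support with respect to $K$. After reindexing, write $\A'=\{A_{i_1},\dots,A_{i_t}\}$ with $\{i_1,\dots,i_t\}\subseteq [s]$. Then, by the negation of Definition \ref{def1} applied to $\A'$, there exist some $m\in\NN_{>0}^n$ and elements $g_j\in S(m)_{A_{i_j}}$ for $j=1,\dots,t$ such that the ideal $J:=(g_1,\dots,g_t)$ of $S(m)$ is not radical.

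For the same $m$, I would then produce a choice of elements that witnesses the failure of $\A$ itself to be a radical support: for each $j\in\{1,\dots,t\}$ set $f_{i_j}:=g_j\in S(m)_{A_{i_j}}$, and for every remaining index $k\in [s]\setminus\{i_1,\dots,i_t\}$ set $f_k:=0\in S(m)_{A_k}$. Since $0$ lies in every multigraded component, this is a legitimate choice under Definition \ref{def1}. The ideal $(f_1,\dots,f_s)$ generated by this choice equals $J$, hence is not radical, contradicting the hypothesis that $\A$ is a radical support with respect to $K$.

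The only conceptual point to verify is that the definition of a radical support allows one to take the zero element in any $S(m)_{A_k}$; once this is observed, no further machinery is needed and there is no real obstacle. In particular, none of Lemma \ref{facile}, Lemma \ref{suppreg} or Lemma \ref{nonrad1} is required for this statement.
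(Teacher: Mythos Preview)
Your argument is correct and is precisely the paper's own approach: the key (and only) observation is that Definition~\ref{def1} permits some of the $f_i\in S_{A_i}$ to be zero, so any non-radical ideal witnessing failure of a subcollection also witnesses failure of $\A$. You have simply written out in full the one-line remark the paper makes.
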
 
 
\begin{proof}  
It suffices to observe that in Definition \ref{def1} some of the polynomials  $f_i\in S_{A_i}$ can be taken equal to $0$. 
\end{proof} 

Given a collection $\A=\{ A_1,\dots, A_s\}$ of subsets  of $[n]$ we define a non-oriented graph $G(\A)$ with possibly multiple   edges and no loops as follows. The vertices of $G(\A)$ are labelled by elements in $[s]=\{1,2,\dots,s\}$  and we put an edge labelled with $j\in [n]$ between vertex $v$ and vertex $w$    if $j\in A_v\cap A_w$. This will be denoted by  
$$\includegraphics[width=0.7\textwidth, trim={135 650 65 135}]{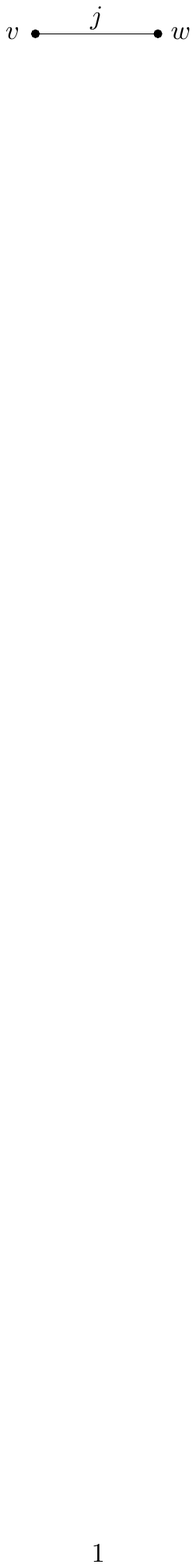} $$
A  cycle in $G(\A)$  is a sequence of vertices and edges  of $G(\A)$ 
$$\includegraphics[width=0.7\textwidth, trim={135 550 65 130}]{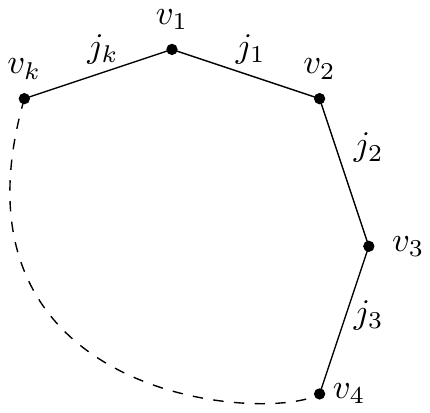}$$ with $v_1,\dots, v_k$ distinct. 
A $k$-cycle is  a cycle  which involves $k$ edges and $k$ distinct vertices. 

We are interested in cycles with non-constant edge labels, i.e.,  cycles with edge labels $j_1,\dots, j_k$ such that  $j_a\neq j_b$ for at least a pair $a,b$ in $[k]$. We observe the following. 
 
\begin{Lemma}\label{edgelabel}
If $G(\A)$  has  a cycle with non-constant edge labels, then it has a cycle where all the edge labels are distinct. 
\end{Lemma}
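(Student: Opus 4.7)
The plan is to argue by strong induction on the length $k$ of a non-constant-label cycle $C$ in $G(\A)$, showing that from any such $C$ one can extract a cycle whose edge labels are pairwise distinct. First I would dispose of the base case $k=2$: any two distinct edges joining the same pair of vertices in $G(\A)$ must carry distinct labels, since by construction, for each $j\in A_v\cap A_w$ there is a unique $j$-labeled edge between $v$ and $w$. Hence any non-constant-label $2$-cycle already has distinct labels.

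For the inductive step $k\ge 3$, I would write $C$ as $v_1,\ldots,v_k,v_1$ with edges $e_1,\ldots,e_k$ of labels $j_1,\ldots,j_k$ and assume $j_a=j_b=j$ for some $a<b$ (otherwise $C$ itself is the desired cycle). The key observation is that $j\in A_{v_i}$ for each $i\in\{a,a+1,b,b+1\}$, so $G(\A)$ contains a $j$-labeled chord between any two distinct such vertices. If $e_a$ and $e_b$ share a common vertex $v$---either $b=a+1$ or $\{a,b\}=\{1,k\}$---let $u,w$ be their other endpoints; replacing the pair $e_a,e_b$ by the chord labeled $j$ between $u$ and $w$ produces a cycle of length $k-1$. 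This shorter cycle still has non-constant labels (it retains every non-$j$ label of $C$), so the induction hypothesis supplies a cycle with distinct labels.

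Otherwise $e_a$ and $e_b$ share no vertex, which forces $2\le b-a\le k-2$ and makes $v_a,v_{a+1},v_b,v_{b+1}$ pairwise distinct. Then the chord $\gamma$ labeled $j$ from $v_a$ to $v_b$ splits $C$ into two subcycles of lengths $b-a+1$ and $k-(b-a)+1$, both strictly less than $k$. If both had all labels equal to $j$, then so would $C$, contradicting non-constancy; hence at least one of them is a strictly shorter non-constant-label cycle, and the induction hypothesis again applies.

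The main obstacle I anticipate is checking that the chord used in each reduction is a genuinely new edge of $G(\A)$ and not one of the original $e_i$ in disguise---a real concern in the ``wraparound'' configuration $\{a,b\}=\{1,k\}$ and for small $k$ such as $k=3$, where a naive split could collapse. Grouping the wraparound with the ordinary consecutive case, and using the strict bound $2\le b-a\le k-2$ in the non-consecutive case to ensure the chord's endpoints are separated along $C$ by more than one step on each side, is what makes the reductions go through in every boundary configuration.
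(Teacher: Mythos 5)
Your proof is correct. Both your argument and the paper's proceed by reducing the length of a non-constant-label cycle using a chord of $G(\A)$ created by a repeated label, so the core mechanism is the same. The difference is in how the reduction is organized. You run a uniform strong induction on cycle length with two cases (the two repeated-label edges are adjacent or not), and in the non-adjacent case you split $C$ into two subcycles and use a pigeonhole observation to find one that is still non-constant. The paper avoids this dichotomy by a small preliminary normalization: since the labels of $C$ are non-constant, a repeated label (say $1$) must somewhere be adjacent to a different label (say $2$); rotating $C$ to place this configuration at the start and drawing the chord from $v_2$ (the vertex shared by the $1$- and $2$-labeled edges) to $v_p$ (where label $1$ recurs) produces a \emph{single} shorter cycle that automatically contains both labels $1$ and $2$, so no case split and no two-subcycle pigeonhole is needed. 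Your approach is a bit more mechanical but slightly more careful about the boundary configurations ($2$-cycles, wraparound $\{a,b\}=\{1,k\}$, chord coinciding with an existing edge); your observation that the chord only risks coinciding with an $e_i$ when it joins consecutive vertices, in which case the label is forced to differ, correctly disposes of that worry. Both arguments terminate because the length strictly decreases and every $2$-cycle in $G(\A)$ has distinct labels by construction.
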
 

\begin{proof} 
By assumption there is a cycle, say a $k$-cycle,  with non-constant edge labels, call it $C$. If $k=2$ then $C$ has distinct edge labels and we are done. So we may assume $k>2$.  We may also assume that $C$ has some repeated  edge labels (otherwise, we are done).  Say the edge label $1$ appears more than once in $C$. Since the edge labels are not constant, the edge label $1$ must  be adjacent  at least once to a different edge label, say $2$. Up to  a ``rotation" we may hence  assume that $C$  looks like this: 
$$\includegraphics[width=0.7\textwidth, trim={135 550 65 130}]{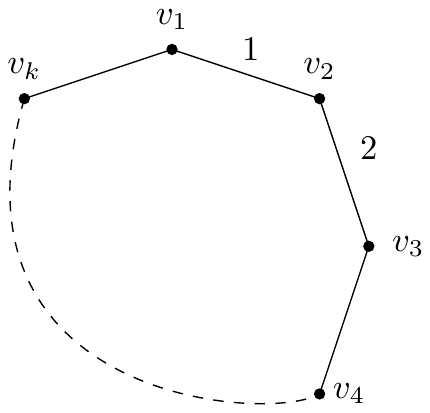}$$
We know that there is at least another label equal to $1$, say from $v_p$ to $v_{p+1}$, so that  
$$\includegraphics[width=0.7\textwidth, trim={135 550 65 130}]{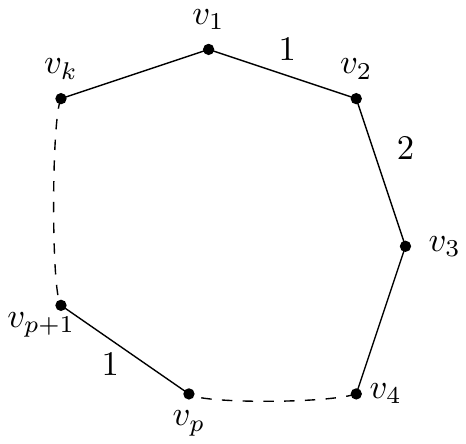}$$
with $3\leq p\leq k$ and $p+1=1$ if $p=k$. Then $1\in A_{v_2} \cap A_{v_p}$, hence the edge 
$$\includegraphics[width=0.7\textwidth, trim={135 650 65 135}]{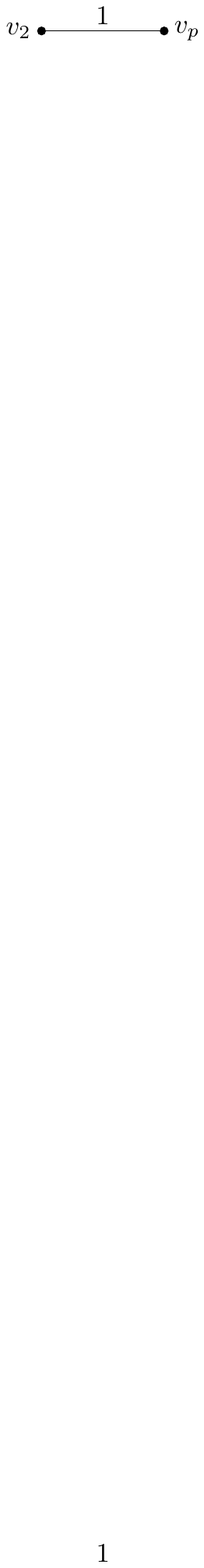} $$
is in $G(\A)$.  Therefore we have a  shorter cycle  in $G(\A)$
$$\includegraphics[width=0.7\textwidth, trim={135 550 65 130}]{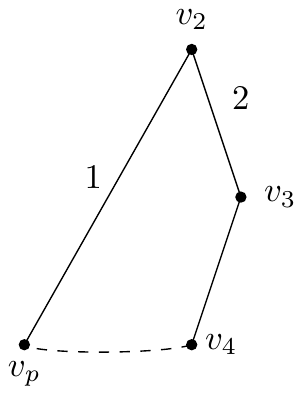}$$
with non-constant edge labels. Iterating the process, we produce a cycle where all the edge labels are distinct. 
\end{proof} 
  
\begin{Lemma}\label{nonrad} 
Let $p\in \NN$ with $p\geq 2$ and $K$ be any field. Set $S=K[x_1,\dots,x_p, y_1,\dots, y_p]$ with $\ZZ^p$-graded structure given by $\deg x_i=\deg y_i=e_i$ for every $i=1,\dots, p$. Let 
$$I=(x_{i+1}y_i-x_iy_{i+1}  : i=1,2,\dots, p-1)+(y_1y_p).$$
Then $I$ is not radical. In particular,    
$$\A=\{ \{1,2\}, \{2,3\}, \{3,4\},\dots, \{p-1,p\}, \{1,p\}  \}$$
is not a radical support with respect to $K$. 
\end{Lemma}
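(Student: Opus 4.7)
The plan is to produce a single element $\alpha\in S$ satisfying $\alpha\notin I$ and $\alpha^2\in I$, which directly contradicts radicality. The ``cyclic'' structure of the generators suggests the monomial candidate $\alpha=x_2x_3\cdots x_py_1$, of multidegree $(1,1,\dots,1)$; by symmetry one could equally work with $\beta=x_1x_2\cdots x_{p-1}y_p$.

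Showing $\alpha^2\in I$ is the easy part. Using the adjacent minor relations $x_{i+1}y_i\equiv x_iy_{i+1}$ one telescopes
$$x_2x_3\cdots x_py_1\;\equiv\;x_1x_2\cdots x_{p-1}y_p\pmod{I_{\mathrm{adj}}},$$
where $I_{\mathrm{adj}}=(x_{i+1}y_i-x_iy_{i+1}:1\le i\le p-1)\subseteq I$. Hence $\alpha\equiv\beta\pmod I$, so $\alpha^2\equiv\alpha\beta\pmod I$. Since the monomial $\alpha\beta$ contains $y_1y_p$ as a factor, $\alpha\beta\in(y_1y_p)\subseteq I$, and therefore $\alpha^2\in I$.

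The harder part, and the main obstacle, is showing $\alpha\notin I$. I would first introduce the coarser $\ZZ$-grading on $S$ given by $\deg x_i=0$ and $\deg y_i=1$. In this grading the minors $x_{i+1}y_i-x_iy_{i+1}$ are homogeneous of degree $1$, the generator $y_1y_p$ has degree $2$, and $\alpha$ has degree $1$. Writing any expression $\alpha=\sum_ic_ig_i\in I$ and extracting the $y$-degree $1$ component forces the coefficient of $y_1y_p$ to have $y$-degree $-1$ and hence to vanish. Thus $\alpha\in I$ would reduce to the purely binomial question $\alpha\in I_{\mathrm{adj}}$.

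Finally, to rule out $\alpha\in I_{\mathrm{adj}}$, I would apply the $K$-algebra homomorphism $\phi\colon S\to R=K[x_1,\dots,x_p,y_1]$ that fixes every $x_i$ and $y_1$ and sends $y_i\mapsto x_i$ for $i\ge 2$. All minors with $i\ge 2$ map to $0$, while $x_2y_1-x_1y_2\mapsto x_2(y_1-x_1)$, so $\phi(I_{\mathrm{adj}})\subseteq(x_2(y_1-x_1))$. On the other hand $\phi(\alpha)=x_2x_3\cdots x_py_1$, and in the polynomial ring $R$ this is not divisible by $y_1-x_1$, since evaluating at $y_1=x_1$ yields the nonzero monomial $x_1x_2\cdots x_p$. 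Therefore $\alpha\notin I_{\mathrm{adj}}$, hence $\alpha\notin I$, so $I$ is not radical. The concluding ``in particular'' assertion then follows directly from Definition \ref{def1} applied to the choice $f_i=x_{i+1}y_i-x_iy_{i+1}$ for $1\le i\le p-1$ and $f_p=y_1y_p$.
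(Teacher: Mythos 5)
Your proof is correct, and it takes a genuinely different route from the paper. The paper's proof asserts (as ``a straightforward application of Buchberger's algorithm'') that the reduced Gr\"obner basis of $I$ with respect to a degrevlex order is obtained by adjoining the monomials $(x_1y_p)y_2,\ (x_1y_p)x_2y_3,\ \dots,\ (x_1y_p)x_2\cdots x_{p-1}y_p$, and then reads off directly from the initial ideal that $\beta = x_1\cdots x_{p-1}y_p\notin I$ while $\beta y_p\in I$ (hence $\beta^2\in I$). Your argument avoids Gr\"obner bases entirely and works with the symmetric element $\alpha=x_2\cdots x_py_1$: the membership $\alpha^2\in I$ is established by telescoping through the adjacent minors to get $\alpha\equiv\beta\pmod{I_{\mathrm{adj}}}$ and observing $\alpha\beta\in(y_1y_p)$, and the non-membership $\alpha\notin I$ is established by the coarse $y$-degree grading (which kills the $y_1y_p$ generator at $y$-degree $1$) followed by the specialization $y_i\mapsto x_i$ for $i\ge 2$, under which $I_{\mathrm{adj}}$ lands in the principal ideal $(x_2(y_1-x_1))$ while $\phi(\alpha)=x_2\cdots x_py_1$ does not. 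What your approach buys is full self-containment: the paper leaves the reader to verify a Buchberger computation, whereas your two steps are elementary and directly checkable. What the paper's approach buys is brevity and, via the explicit Gr\"obner basis, additional structural information about $I$ that your argument does not produce. Both yield the final ``in particular'' conclusion in the same way, by matching the degrees of the generators against $\A$.
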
 

\begin{proof} 
A straightforward application of Buchberger's Algorithm shows  that the reduced Gr\"obner basis of $I$ with respect to the degree reverse lexicographic term order induced by the total order on the variables 
$$x_1>\dots>x_p >y_1>\dots >y_p$$
is obtained by adding to the given generators the monomials
$$(x_1y_p) y_2, (x_1y_p) x_2y_3,  (x_1y_p) x_2x_3y_4, \dots,  (x_1y_p) x_2x_3\cdots x_{p-1}y_p.$$
Therefore $(x_1y_p) x_2x_3\cdots x_{p-1}y_p\in I$ and $x_1 x_2x_3\cdots x_{p-1}y_p \not\in I$. This shows that $I$ is not radical, hence $\A$ is not a radical support with respect to $K$.    
\end{proof}

\section{The main result} 
 
We are ready to formulate and prove our main result. 

\begin{Theorem} 
\label{main}
Given a collection $\A=\{ A_1,\dots, A_s\}$ of non-empty subsets  of $[n]$ we have that the following conditions are equivalent. 
\begin{itemize} 
\item[(1)] $\A$ is a radical support. 
\item[(2)] $\A$ is a radical support for at least one field $K$. 
\item[(3)] The graph $G(\A)$  has no  cycles with distinct edge labels. 
\item[(4)] The graph $G(\A)$ has no cycles with non-constant edge labels. 
\item[(5)]  There exist a field $K$, $m=(m_1,\dots,m_n)$ and a  regular sequence $f_1,\dots, f_s$ of degrees $\A$ in $S$ such that the ideal $(f_1,\dots, f_s)$ is Cartwright-Sturmfels.
\item[(6)]  For every  field $K$, every $m=(m_1,\dots,m_n)$ such that $m_j\geq |\{v\in[s] : j\in A_v\}|$ for all $j$, and every $f_1,\dots, f_s$ regular sequence of degrees $\A$ in $S$ the ideal $(f_1,\dots, f_s)$ is Cartwright-Sturmfels.
 \item[(7)]  $\A$ is a Cartwright-Sturmfels support. 
 \end{itemize} 
\end{Theorem}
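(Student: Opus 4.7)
The implications $(1) \Rightarrow (2)$, $(7) \Rightarrow (6)$, and $(6) \Rightarrow (5)$ are immediate, and $(3) \Leftrightarrow (4)$ is Lemma~\ref{edgelabel}. I plan to close the equivalences by establishing $(2) \Rightarrow (4)$, $(4) \Rightarrow (7)$, $(7) \Rightarrow (1)$, and $(5) \Rightarrow (4)$.

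For $(2) \Rightarrow (4)$, I argue the contrapositive. Suppose $G(\A)$ contains a cycle with non-constant edge labels; Lemma~\ref{edgelabel} upgrades this to a cycle with distinct labels $j_1, \dots, j_p$ at vertices $v_1, \dots, v_p$ satisfying $j_i \in A_{v_i} \cap A_{v_{i+1}}$ modulo $p$. By Lemma~\ref{subcolle} I restrict to the subcollection $\{A_{v_1}, \dots, A_{v_p}\}$. Taking $m_{j_i} = 2$ for cycle labels and $m_\ell = 1$ for the other labels appearing in these sets, Lemma~\ref{nonrad} produces a non-radical ideal in the pure-cycle case $A_{v_i} = \{j_{i-1}, j_i\}$. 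To accommodate extra labels $\ell \in A_{v_i}$ beyond the cycle, I iterate Lemma~\ref{nonrad1}: each extra label contributes a fresh variable $x_\ell$, and I reorder the generators before each application so that those to be multiplied by $x_\ell$ form a tail. Each iteration preserves non-radicality and produces the required non-radical ideal with the correct multidegrees.

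The implication $(4) \Rightarrow (7)$ is the technical heart of the proof. Given (4), I must show that every $I = (f_1, \dots, f_s)$ with $f_i \in S_{A_i}$ is Cartwright--Sturmfels for any $K$ and $m$. I proceed in two stages. First, for $m_j \geq |\{v : j \in A_v\}|$ and $f_1, \dots, f_s$ the monomial regular sequence provided by Lemma~\ref{suppreg}, the Koszul complex gives $\KK_{S/I}(z) = \prod_v(1 - z^{A_v})$, so by Proposition~\ref{JandE} it suffices to exhibit $E \in \M(T, m)$ with $\KK_E(z) = \prod_v(1 - \prod_{j \in A_v}(1 - z_j))$. I propose $E = \prod_{v=1}^s (y_j : j \in A_v) \in \M(T,m)$, the product of the monomial prime ideals of $T$ associated to the $A_v$'s (well-defined in $\M(T,m)$ since the exponent of $y_j$ in any generator is at most $|\{v : j \in A_v\}| \leq m_j$). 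Under (4), induction on a leaf vertex of the contracted graph obtained from $G(\A)$ by merging edges of equal label yields the factorization $\KK_E(z) = \prod_v \KK_{(y_j : j \in A_v)}(z) = \prod_v(1 - \prod_{j \in A_v}(1 - z_j))$, so $J = \psi(E) \in \Brad(S)$ has the required Hilbert series and $I$ is Cartwright--Sturmfels. Second, to pass from this special case to arbitrary $I$ and $m$, I combine Lemma~\ref{CS-K} (transferring the Cartwright--Sturmfels property across fields and compatible $m$'s whenever the $K$-polynomial matches) with Proposition~\ref{section} (realizing an arbitrary $I \subseteq S$ as a $\ZZ^n$-graded linear section of a Cartwright--Sturmfels ideal in a larger polynomial ring obtained by enlarging $m$ and adding generic linear forms).

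For the closing implications: $(7) \Rightarrow (1)$ follows from Proposition~\ref{ginRad}---any $I \in \CS(S)$ has $\gin(I) \in \Brad(S)$, so $\gin(I)$ is radical, and $I$ is then radical by the classical fact that an ideal with a radical initial ideal is itself radical. For $(5) \Rightarrow (4)$, the Koszul complex forces the $K$-polynomial of a Cartwright--Sturmfels regular sequence of degrees $\A$ to be $\prod_v(1 - z^{A_v})$, so Proposition~\ref{JandE} produces $E \in \M(T, m)$ with $\KK_E(z) = \prod_v(1 - \prod_{j \in A_v}(1 - z_j))$; a combinatorial analysis of the $K$-polynomials realizable by monomial ideals in $T$ then forces (4), since a cycle with non-constant labels introduces inclusion-exclusion incompatibilities that no such $E$ can satisfy (as can be seen already by computing both sides in the small case $A_v = A_w$ for two parallel distinct-label edges). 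The main obstacle I anticipate is the inductive verification of the factorization of $\KK_E(z)$ under (4) in the constructive stage of the third paragraph, which relies on the locally tree-like structure of $G(\A)$ and on a careful splitting of the product ideal at a leaf vertex.
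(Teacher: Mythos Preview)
Your overall scheme matches the paper's: the chain $(1)\Rightarrow(2)\Rightarrow(3)\Leftrightarrow(4)$ via Lemmas~\ref{subcolle}, \ref{nonrad}, \ref{nonrad1}, and \ref{edgelabel}, and the return through the Cartwright--Sturmfels property, are exactly what the authors do. Two points deserve comment.

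In the first stage of $(4)\Rightarrow(7)$ you propose to prove $\KK_E(z)=\prod_v \KK_{E_v}(z)$ for $E=\prod_v E_v$ by a leaf induction. This is a genuine and more self-contained alternative to the paper's route: the paper observes that (4) is equivalent to $E$ having $\prod_v|A_v|$ distinct monomial generators and then invokes the main theorem of \cite{CT} to conclude that the minimal free resolution of $E$ is the tensor product $\bigotimes_v \K^*(E_v)$ of the truncated Koszul complexes, whence the factorisation of $\KK_E$. Your induction works, and the clean way to set it up is via the bipartite incidence graph $H$ on $[s]\sqcup[n]$ with an edge $\{v,j\}$ whenever $j\in A_v$: condition (3) says precisely that $H$ is a forest, so $H$ has a leaf, and removing a leaf $j\in[n]$ (belonging to a unique $A_v$) gives a short-exact-sequence computation for $E=y_j E'+F$ that yields the inductive step. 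Your phrase ``contracted graph obtained by merging edges of equal label'' is less transparent than this bipartite picture.

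There is, however, a gap in your second stage of $(4)\Rightarrow(7)$, and your separate $(5)\Rightarrow(4)$ does not repair it. For the passage from one Cartwright--Sturmfels regular sequence to \emph{arbitrary} $f_i\in S_{A_i}$, ``enlarging $m$ and adding generic linear forms'' does not produce what is needed: enlarging $m$ and cutting back by the extra variables returns the same $f_i$, which are still not a regular sequence in either ring, and a \emph{generic} linear section of a generic regular sequence has no reason to yield the prescribed $f_i$. The paper's device here is specific and essential: introduce new variables $t_{ij}$ with $\deg t_{ij}=e_j$ for each $j\in A_i$, set $g_i=f_i+\prod_{j\in A_i} t_{ij}$ in $R=S[t_{ij}]$, note that the $g_i$ form a regular sequence because their lex-initial terms $\prod_j t_{ij}$ are pairwise coprime, conclude $(g_1,\dots,g_s)\in\CS(R)$ from your first stage together with Lemma~\ref{CS-K}, and then apply Proposition~\ref{section} with the linear section $t_{ij}=0$ to obtain $(f_1,\dots,f_s)\in\CS(S)$. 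This is really the implication $(5)\Rightarrow(7)$; once you have it, your sketch of $(5)\Rightarrow(4)$ via ``inclusion--exclusion incompatibilities'' is unnecessary and can be dropped.
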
 

\begin{proof} 
It is clear that (1) implies (2). We prove that (2) implies (3) by contradiction. That is, we assume that (2) holds and that $G(\A)$ has a cycle with distinct edge labels and derive a contradiction. 
Renaming vertices and edges if needed, we may hence assume that $G(\A)$ contains the  $p$-cycle 
$$\includegraphics[width=0.7 \textwidth, trim={135 550 65 130}]{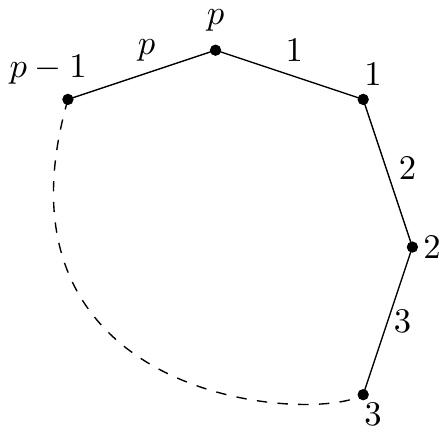}$$
with $p>1$. 
By \ref{subcolle} the subcollection $\{A_1,\dots, A_p\}$ is a radical support for $K$. Notice that by construction  
\begin{equation}\label{inclu}
A_1\supseteq \{1,2\}, \  A_2\supseteq \{2,3\}, \  \dots,  A_{p-1} \supseteq \{p-1,p\} \mbox{  and }  A_p\supseteq \{1,p\}. 
\end{equation} 
  If in (\ref{inclu}) we have all equalities then by  \ref{nonrad} we have polynomials $f_1,f_2,\dots, f_p$ of degree $\{1,2\}, \{2,3\}, \dots, \{p-1,p\}, \{1,p\}$ in a multigraded polynomial ring over $K$ that generate a non-radical ideal  and hence we have a contradiction.  
 If instead some inclusions in (\ref{inclu}) are strict   we still consider the polynomials $f_1,\dots, f_p$ above and set 
     $$B= (\cup_{i=1}^p A_i )\setminus \{1,\dots, p\}.$$
  For every $u\in B$ we pick a new variable $x_u$  of degree $\{u\}$ and set 
$$f_i'=f_i (\prod_{u\in B\cap A_i} x_u).$$
Now by the iterated application of  \ref{nonrad1} we know that the polynomials $f_1',\dots, f_p'$ generate a non-radical ideal and have degrees $A_1,\dots, A_p$ respectively.  A contradiction.

It follows from \ref{edgelabel} that (3) implies  (4). 

Equivalence of (5) and (6) follows from \ref{CS-K}, since the $K$-polynomial of a regular sequence generated by elements of degree $\A$ is 
\begin{equation} \label{polyF} 
\KK_\A(z)=\prod_{v=1}^s (1-\prod_{j\in A_v} z_j) \in \ZZ[z_1,\dots,z_n],
\end{equation} 
hence it depends only on $\A$. 

Now we prove that (4) implies (5). For $j\in[n]$, let $m_j=|\{v\in[s] : j\in A_v\}|$. Let $K$ be any field. By \ref{suppreg}  there exists a regular sequence $f_1,\dots, f_s$  in $S(m)$ of degrees $\A$. 
Set $I=(f_1,\dots,f_s)$. Then the K-polynomial of $S/I$ is as in \ref{polyF}. In order to prove  $I$ is a Cartwright-Sturmfels ideal, by \ref{JandE}, it is suffices to exhibit a monomial ideal $E$ in the polynomial ring $T=K[y_{1},y_{2},\dots, y_{n}]$ equipped with the (fine) $\ZZ^n$-grading $\deg y_i=e_i \in \ZZ^n$ such that: 
\begin{itemize} 
\item[(i)]  the $K$-polynomial  of $E$   is  $\KK^*_\A(z)$
\item[(ii)]  for every $j$, the largest exponent of $y_j$ in the generators of $E$ is bounded from above by $m_j$.
\end{itemize} 
  
We have  
$$\KK^*_\A(z)=\KK_\A(1-z_1,\dots, 1-z_n)= \prod_{v=1}^s G_v(z_1,\dots, z_n)$$ 
with 
$$G_v(z_1,\dots, z_n)=(1-\prod_{j\in A_v} (1-z_j) )=\sum_{\emptyset\neq B\subseteq A_v } (-1)^{|B|+1} \prod_{j\in B} z_j.$$
Notice that the ideal $E_v=( y_j : j\in A_v)$ is resolved by the truncated Koszul complex $\K^*(E_v)$ associated  to the variables $y_j$ with  $j\in A_v$ and hence its $\ZZ^n$-graded Hilbert series is $$\frac{G_v(z_1,\dots, z_n)}{\prod_{i=1}^n (1-z_i)}.$$ 
Indeed the full Koszul complex $\K(E_v)$ resolves $T/E_v$ and we take the truncated Koszul complex obtained from $\K(E_v)$  by removing the component in homological position $0$ and shifting the remaining homological positions by $1$  to get a $T$-resolution of the ideal $E_v$. 
 
We claim that, under the assumption on $G(\A)$ from (4),  the ideal $$E=\prod_{v=1}^s  E_v$$ has the properties (i) and (ii). Indeed, (ii) is satisfied by construction. We now prove (i).  
 
The minimal free resolution of the product of any collection of ideals $I_1,\dots, I_v$  generated by linear forms is described in \cite{CT}. It is proved that  such a resolution is obtained as a subcomplex,  supported on a specific polymatroid, of the tensor product of the truncated Koszul complexes $\K^*(I_j)$.   We notice that assumption (4) on $G(\A)$ is equivalent to the assumption that the ideal $E$ has $\prod_{v=1}^s  |A_v|$ generators. 
 Then the results in \cite{CT} show that the minimal free resolution of $E$ is the tensor product  $\K^*(E_1)\otimes \K^*(E_2) \otimes \cdots \otimes \K^*(E_v)$. This implies (i) and concludes the proof that (4) implies (5). 
 
Since  every Cartwright-Sturmfels ideal is radical, then (7) implies (1).
 
 It remains to prove that (5) implies (7).  Let $K$ be any field, $S=K[x_{ij}\mid 1\leq j\leq n,\ 1\leq i\leq m_j]$ be a $\ZZ^n$-graded polynomial ring over $K$ and $I$ an ideal generated by elements $f_1,\dots, f_s$ of degrees $A_1,\dots, A_s$.   For $i=1,\dots, s$ we introduce new variables $t_{ij}$  for every pair $i\in [s]$ and $j\in [n]$ such that $j\in A_i$, with $\deg t_{ij}=e_j\in \ZZ^n$. Then in the $\ZZ^n$-graded polynomial ring  
$$R=S[t_{ij} :  i\in [s] \ \  j\in A_i]$$ we consider the polynomials $$g_i=f_i+\prod_{j\in A_i} t_{ij}$$ of degree $A_i$ and observe that the $g_1,\dots, g_s$ form  a regular sequence. 
Indeed, the leading term of $g_i$ with respect to the lexicographic order with the $t$'s larger than the $x$'s is $\prod_{j\in A_i} t_{ij}$. Since the leading terms of the $g_1,\dots, g_s$ are pairwise coprime, we have that they form a Gr\"obner basis and $g_1,\dots, g_s$   form a regular sequence of elements of degree $\A$ is the  $\ZZ^n$-graded polynomial ring $R$. Hence $(g_1,\dots, g_s)\in \CS(R)$ by assumption.   The $g_i$'s specialize to  to the $f_i$'s modulo the  multigraded ideal of linear forms $(t_{ij} )$.  Hence $I\in \CS(S)$  by \ref{section}. 
  \end{proof}

\end{document}